\newtheorem{thm}{Theorem}
\newtheorem{lem}[thm]{Lemma}
\theoremstyle{definition}
\newtheorem*{defn}{Definition}
\newtheorem*{rem}{Remark}
\newtheorem{ex}[thm]{Example}
\def\A{\mathbb{A}}
\def\zz{\mathbb{Z}}
\def\qq{\mathbb{Q}}
\def\pp{\mathbb{P}}
\DeclareMathOperator{\Aut}{Aut}
\DeclareMathOperator{\Res}{Res}
\DeclareMathOperator{\PGL}{PGL}
\DeclareMathOperator{\id}{id}
\DeclareMathOperator{\ch}{char}
\DeclareMathOperator{\N}{N}
\begin{document}

\title{Explicit Descriptions of Quadratic Maps on $\pp^1$ defined over a field $K$}

\author[Manes]{Michelle Manes}
\address{
Department of Mathematics \\
University of Hawaii \\
Honolulu, HI  96822 \\ USA
}
\email{mmanes@math.hawaii.edu}

\author[Yasufuku]{Yu Yasufuku}
\address{
City University of New York Graduate Center\\
New York, NY 10016\\ USA
}
\email{yasufuku@post.harvard.edu}

\subjclass[2010]{
37P05,
37P45
(primary);
14G05
(secondary).}
\keywords{arithmetic dynamics, quadratic rational maps, normal forms}

\begin{abstract}
We describe an explicit parameter space for the set of all quadratic rational maps on $\pp^1$ defined over a field $K$, up to conjugacy over $K$.
 \end{abstract}

\date{\today}

\maketitle

\section{Introduction}
Let $\phi:\pp^1 \to \pp^1$ be a morphism of degree~$d$ defined over a field $K$.  In other words, $\phi(z) = P(z)/Q(z)$ with $P, Q \in K[z]$ having no common root in $\overline K$,  and $\max\{\deg P, \deg Q\} = d$.  Such rational maps are the fundamental objects of study in one-dimensional arithmetic dynamics.  (Because we are working in one dimension, rational maps and morphisms exactly coincide, so we use the terms interchangeably.) 
Performing the same change of coordinates on both the domain and target spaces preserves all dynamical behavior, so we are usually interested in studying conjugacy classes rather than individual rational maps.

Generalizing
work by Milnor~\cite{milnrat}, Silverman~\cite{mdgit} proved that the moduli space of degree-$d$ rational maps up to conjugacy, which we denote $\mathcal M_d$, exists as an affine integral scheme over $\zz$ and that
$\mathcal M_2$ is isomorphic to $\A^2_{\zz}$.   As explained in Section~\ref{prelim}, if $\phi:\pp^1\to\pp^1$ has degree~$2$, and
if we let $\lambda_1, \lambda_2, \lambda_3$ be the multipliers of the three fixed points of $\phi$ (counted with multiplicity), then the first two symmetric functions of these multipliers form natural coordinates for $\mathcal M_2$:
\begin{equation}\label{m2coords}
\mathcal M_2  =\{ (\sigma_1, \sigma_2)\} \text{ where }
\sigma_1 = \lambda_1 + \lambda_2 + \lambda_3, \text{ and }
\sigma_2 =  \lambda_1 \lambda_2 + \lambda_1 \lambda_3 +\lambda_2 \lambda_3.
\end{equation}

In~\cite{fodfom} Silverman shows that in the case of polynomial maps and maps of even degree, the field of moduli for a rational map is always a field of definition.  In particular, a $K$-rational point in $\mathcal M_2$ corresponds to a conjugacy class of quadratic rational maps $[\psi]$, and some map $\phi \in [\psi]$ must have coefficients in the field $K$.  However, it was not clear from any previous work how to explicitly find such a map, given a $K$-rational point in the moduli space.

Furthermore, each family $[\psi]$ describes a conjugacy class of maps, but only up to conjugacy over an algebraically closed field $\overline K$.  Much recent work in arithmetic dynamics has focused on quadratic polynomials.  These results invariably use the normal form $z^2+c$ and the fact that it gives a complete description of quadratic polynomials up to $K$-conjugacy.  (See, for example,~\cite{PreImages}, \cite{HutzIngAlg}, and~\cite{IngramHts}.)  Attempts to extend these results to arbitrary quadratic rational maps would benefit from a complete description of the $K$-conjugacy classes of such maps.  Our  result provides such a description.

\begin{thm}\label{mainthm}
Let $K$ be a field with characteristic different from $2$ and $3$.
Let $\psi(z) \in K(z)$ have degree~$2$, and let $\lambda_1, \lambda_2, \lambda_3 \in \overline K$ be the multipliers of the fixed points of $\psi$ \textup(counted with multiplicity\textup).
\begin{enumerate}[\textup(a\textup)]

\item\label{noaut1}
If the multipliers are distinct or if exactly two multipliers are~$1$, then $\psi(z)$ is conjugate over $K$ to the map
\[
\phi(z) = \frac{2 z^2 + (2-\sigma_1) z + (2-\sigma_1)}{-z^2 + (2+\sigma_1)z + 2-\sigma_1-\sigma_2} \in K(z),
\]
where $\sigma_1 $ and $\sigma_2$ are the first two symmetric functions of the multipliers.  Furthermore, no two distinct maps of this form are conjugate to each other over~$\overline K$.

\item\label{c2aut1}\label{c2aut2}
If $\lambda_1 = \lambda_2 \neq 1$ and $\lambda_3 \neq \lambda_1$ or if $\lambda_1=\lambda_2=\lambda_3=1$, then $\psi$ is conjugate over $K$ to a  map of the form
\begin{equation*}
\phi_{k,b}(z) = kz + \frac b z
\end{equation*}
with $k\in K\smallsetminus\{0,-1/2\}$ \textup(in fact, $k = \frac{\lambda_1+1}{2}$\textup),  and $b \in K^*$.
Furthermore, two such maps $\phi_{k,b}$ and $\phi_{k',b'}$ are conjugate over $\overline K$ if and only if $k = k'$; they are conjugate over $K$ if in addition $b/b' \in \left(K^*\right)^2$.

\item\label{s3aut}
If  $\lambda_1=\lambda_2=\lambda_3=-2$, then $\psi$ is conjugate over $K$ to a  map of the form
\begin{equation*}
\theta_{d,k}(z) = \frac{k z^2 - 2 d z +dk}{z^2 - 2 k z + d},
\qquad \text{with } k \in K, d \in K^*, \text{ and } k^2\neq d.
\end{equation*}
 All such maps are conjugate over $\overline K$.  Furthermore, $\theta_{d,k}(z)$ and $\theta_{d',k'}(z)$ are conjugate over $K$ if and only if
  \begin{align*}
  d'&=b^2 d, \text{ and }\\
k' &\in \left\{
 \frac {bd}{k},  \frac{b \left(d^2 \gamma ^3+3 d k \gamma ^2+3 d \gamma +k\right)}{d k
   \gamma ^3+3 d \gamma ^2+3 k \gamma +1}
\right\}
\end{align*}
for some $\gamma \in K$ and $b\in K^*$.
\end{enumerate}
Each quadratic rational map $\phi(z) \in K(z)$ must fall into exactly one of the cases above, so this gives a complete description of the $K$-conjugacy classes of such maps.
\end{thm}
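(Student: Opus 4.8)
The plan is to handle the three cases separately, in each case first identifying $\Aut(\psi)$ over $\overline K$ and then producing explicit $K$-rational representatives together with the stated conjugacy criteria. The organizing principle is that the $\overline K$-conjugacy class of a quadratic map is determined by, and determines, the pair $(\sigma_1,\sigma_2)$ (this is the isomorphism $\mathcal M_2\cong\A^2$), while the $K$-conjugacy classes lying inside a fixed $\overline K$-class form a torsor under $\Aut(\psi)$. Since for a quadratic map $\Aut(\psi)$ is always trivial, $C_2$, or $S_3$, and these are distinguished precisely by how the multipliers coincide, the three parts correspond to these three automorphism types. I would begin by verifying that the listed multiplier configurations are mutually exclusive and exhaustive and match the three cases; the only delicate point is the placement of the parabolic configurations (two multipliers equal to $1$ in \textup{(a)}, all three equal to $1$ in \textup{(b)}), since a repeated multiplier ordinarily signals a $C_2$, so these must be checked directly against the criterion for $\Aut$ to be nontrivial.

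For case \textup{(a)} the automorphism group is trivial, so by Silverman's theorem (degree $2$ is even) the field of moduli is a field of definition, and the $K$-model is unique up to $K$-conjugacy: if two $K$-models were $\overline K$-conjugate by some $f$, then $f^{-1}f^{\sigma}\in\Aut=\{\id\}$ forces $f\in\PGL_2(K)$. The claim ``no two conjugate over $\overline K$'' is then just the injectivity of $(\sigma_1,\sigma_2)\mapsto[\phi]$, i.e.\ the defining property of the $\mathcal M_2$-coordinates. The real content is therefore to exhibit the displayed map and check that its three fixed-point multipliers have symmetric functions $\sigma_1,\sigma_2$. This is a direct computation: solve $\phi(z)=z$ for the fixed points, evaluate $\phi'$ at each, and form the elementary symmetric functions.

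For case \textup{(b)} I would check that $\phi_{k,b}$ commutes with $z\mapsto -z$ (exhibiting the $C_2$) and compute its multipliers to obtain $k=(\lambda_1+1)/2$. To get the conjugacy statements, observe that any $f\in\PGL_2$ conjugating one $\phi_{k,b}$ to another must satisfy $f\iota f^{-1}=\iota$ for the symmetry $\iota(z)=-z$, hence must preserve the fixed-point set $\{0,\infty\}$ of $\iota$; this forces $f(z)=cz$ or $f(z)=c/z$, and a short calculation shows $k$ is unchanged while $b\mapsto c^2b$. That yields $k=k'$ over $\overline K$ and the additional square-class condition $b/b'\in(K^*)^2$ over $K$.

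Case \textup{(c)} is where the main difficulty lies. All multipliers equal $-2$, $\Aut(\psi)\cong S_3$, and there is a single $\overline K$-class, so the entire content is the $K$-twist calculation. After verifying the $S_3$-symmetry of $\theta_{d,k}$, I would parametrize the $f\in\PGL_2$ conjugating $\theta_{d,k}$ to $\theta_{d',k'}$ and solve $f\circ\theta_{d,k}=\theta_{d',k'}\circ f$. The obstacle is that $S_3$ is nonabelian with six elements acting nontrivially, so the admissible $f$ form a larger family and the induced action on $(d,k)$ is correspondingly intricate; the elimination produces $d'=b^2d$ together with the two-valued formula for $k'$ depending on an auxiliary parameter $\gamma\in K$. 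I expect this explicit resultant-and-elimination step, together with the use of the $\ch K\neq 2,3$ hypotheses, to be the bulk of the work.
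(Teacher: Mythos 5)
The central gap is the \emph{existence} (descent) step in cases (b) and (c): your plan never shows that an arbitrary $\psi\in K(z)$ with the given multiplier configuration is $K$-conjugate to \emph{some} member of the stated family. In case (c) you declare ``the entire content is the $K$-twist calculation'' and then propose to parametrize the $f\in\PGL_2$ conjugating $\theta_{d,k}$ to $\theta_{d',k'}$; that elimination (if carried out) only determines the equivalence relation \emph{within} the family $\{\theta_{d,k}\}$, and cannot show the family exhausts all $K$-forms of $1/z^2$. This exhaustion is the heart of the paper's Lemma~\ref{lem:s3}: the two-cycle of $\psi$ is unique, hence Galois-stable; the second dynatomic polynomial shows it is either $K$-rational or of the form $a\pm b\sqrt d$, so after a $K$-conjugation one may assume the cycle is $(0\leftrightarrow\infty)$ or $\bigl(\sqrt d\leftrightarrow-\sqrt d\bigr)$; then any $g\in\PGL_2(\overline K)$ with $\psi=\phi^g$, $\phi=1/z^2$, must have the one-parameter form $g(z)=\alpha\bigl(z-\sqrt d\bigr)/\bigl(z+\sqrt d\bigr)$, and computing $\phi^g$ shows $\psi\in K(z)$ forces $\sqrt d\,\frac{\alpha^3+1}{\alpha^3-1}=k\in K$, whence $\psi$ \emph{equals} $\theta_{d,k}$. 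Without this (or an equivalent explicit descent) you have a list of pairwise-conjugacy conditions for a family that might miss $K$-classes. The same issue afflicts (b): your argument that a conjugating $f$ commutes with $\iota(z)=-z$ and so has the form $cz$ or $c/z$ correctly yields the conjugacy criterion, but not that every $\psi$ with $\lambda_1=\lambda_2\neq1$ (or all multipliers $1$) admits a $K$-model $kz+b/z$; the paper imports exactly this from \cite[Lemma~1]{PrePerGraphs}, and proving it requires a genuine argument (e.g., that the distinguished fixed point and the involution in $\Aut(\psi)$ are $K$-rational, then normalizing). Relatedly, your organizing claim that the $K$-classes inside a fixed $\overline K$-class ``form a torsor under $\Aut(\psi)$'' is incorrect: they are classified by a subset of the pointed set $H^1\bigl(\Gal(\overline K/K),\Aut(\psi)\bigr)$ — nonabelian for $\mathfrak S_3$ — and realizability is further obstructed in $H^1(G_K,\PGL_2)$, so no counting argument along those lines substitutes for the explicit parametrization.

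In case (a) your outline matches the paper in spirit (trivial automorphism group plus \cite[Proposition~7.2]{fodfom} gives uniqueness of the $K$-class, and injectivity of $(\sigma_1,\sigma_2)$ gives the ``no two conjugate over $\overline K$'' clause), but ``solve $\phi(z)=z$ and evaluate $\phi'$'' glosses the degenerate checks that the paper handles: one must verify the displayed map actually has degree $2$ (its resultant is precisely Milnor's symmetry-locus cubic \eqref{symmlocus}, nonvanishing exactly in case (a)); one must rule out a fixed point colliding with a pole — the paper shows $Q(\lambda)=0$ forces $f'(\lambda)=0$, i.e., a repeated multiplier, contradicting distinctness; and the subcase of exactly two multipliers equal to $1$ must be verified separately (the paper checks directly that $\phi$ then has a double fixed point at $z=1$ and a fixed point at $z=\lambda$ with multiplier $\lambda$), since the multiplier cubic has a double root there and the generic argument degenerates. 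The paper's verification is also structurally cleaner than solving the cubic: the map is engineered so that its fixed points are located at $z=\lambda_i$ with multiplier $\lambda_i$, which is what makes the computation tractable over an arbitrary $K$.
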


\begin{rem}
As will be proved in Lemma \ref{lem:s3}, if $\psi$ satisfies the hypothesis of case \eqref{s3aut}, then it has a two-cycle defined over $K$ if and only if it is conjugate over $K$ to $\theta_{d,k}$ with $d\in (K^*)^2$.  In such a case, $\psi$ is also conjugate over $K$ to
\begin{equation*}
\theta_t(z) = t/z^2,  \quad \text{with } t = \frac{-\sqrt d + k}{\sqrt d +k}.
\end{equation*}
Since $k^2\neq d$, we see that $t$ is a well-defined element of $K^*$.  With this alternative description, $\theta_t(z)$ and $\theta_{t'}(z)$ are conjugate over $K$ if and only if $t/t' \in (K^*)^3$ or $t t' \in (K^*)^3$.
\end{rem}

The theorem is proved via several lemmas, each tackling a specific case of quadratic rational maps based on their automorphism groups.

\subsection*{Acknowledgements}
The authors thank Rob Benedetto for initially posing the question and for suggesting the Remark after Lemma~\ref{lem:s3}.  We also thank Xander Faber, Joe Silverman, and the referee for their helpful and insightful comments, all of which greatly improved the paper.

\section{Preliminaries}\label{prelim}
Throughout, we take $K$ to be a field with characteristic different from $2$ and $3$.
 Let $\phi: \pp^1 \rightarrow  \pp ^1$ be a rational map defined over~$K$.  That is, let  $\phi(z) = P(z)/Q(z)$ with $P, Q \in K[z]$ having no common root in $\overline K$, and $\deg \phi = \max\{ \deg P, \deg Q\}$.
 
 \begin{defn}
Two rational maps $\phi, \psi \in K(z)$ are \emph{conjugate} if there is some $h\in \PGL_2(\overline K)$ such that
\[
    \phi^h \overset{\text{def}}{=} h^{-1}\circ  \phi \circ h= \psi,
\]
 and they are \emph{conjugate over $K$} if we can take $h \in \PGL_2(K)$.
\end{defn}

If $\deg(\phi) = d$, then $\phi$ has $d+1$ fixed points, counted with proper multiplicity.   If $P$ is a finite fixed point of $\phi$, the multiplier at $P$ is defined to be $\phi'(P)$.  For any $h\in \PGL_2$, $h^{-1}(P)$ is a fixed point of $\phi^h$.  Applying the chain rule shows that the multiplier of $\phi$ at $P$ is equal to the multiplier of $\phi^h$ at $h^{-1}(P)$ (as long as both are finite points).  Therefore, we can speak of the set of multipliers of a conjugacy class of rational maps, and we can also extend the definition of the multiplier to the point at infinity.

A finite fixed point has multiplier equal to~$1$ if and only if it is a multiple root of the polynomial $P(z) - z Q(z)$ (see~\cite[Theorem 4.6]{ads}), and this extends to a fixed point at infinity via conjugation in the obvious way.  So for a quadratic rational map, either no multiplier is~$1$ (if the three fixed points are distinct), or two multipliers are~$1$ (corresponding to a fixed point of multiplicity two), or all three multipliers are~$1$ (corresponding to a fixed point of multiplicity three).

Let $\lambda_1, \cdots, \lambda_{d+1}$ be the fixed point multipliers for a rational map $\phi$ of degree~$d$.  As long as none of the $\lambda_i$ are~$1$,  by~\cite[Theorem 1.14]{ads}, we have
\begin{equation}\label{fpident}
\sum_{i=1}^{d+1}
1/(1-\lambda_i)  = 1.
\end{equation}
We conclude that for quadratic rational maps, all three multipliers can be equal only if they are all equal to~$1$, or if they are all equal to~$-2$; and if two multipliers are equal, they cannot be $-1$.  It is now clear that the three categories in Theorem~\ref{mainthm} exhaust all possibilities for quadratic rational maps, so by proving the theorem we will have a complete description of all $K$-conjugacy classes of such maps.

Usually, $\phi^h \neq \phi$ as rational maps, but this is not always the case.  For example, the map $\phi(z) = 2z+ 5/z$ has a nontrivial $\PGL_2$ automorphism $h(z) = -z$.

\begin{defn}
The  \emph{automorphism group} of $\phi\in K(z)$  is
$$\Aut(\phi) = \{f\in \PGL_2\left(\overline K \right) | \phi^f =\phi\}.$$
\end{defn}

If $f \in \Aut(\phi)$, then $h^{-1}\circ f \circ h \in \Aut(\phi^h)$ for any $h\in \PGL_2(\overline K)$, so  conjugate maps have isomorphic automorphism groups.
 If $\deg\phi \geq 2$, then $\Aut(\phi)$ must be a finite subgroup of $\PGL_2\left(\overline K \right)$; furthermore,  if two maps defined over a field $K$ are  conjugate, then they must be  conjugate over $K$ unless the maps have a nontrivial automorphism group (this follows from~\cite[Proposition~$7.2$]{fodfom} or~\cite[Proposition $4.73$]{ads}).  This last fact is essential in our classification of $K$-conjugacy classes.

 In the case of quadratic rational maps, Milnor showed in~\cite[Section~$5$]{milnrat} that
 \begin{itemize}
 \item
 the automorphism group is trivial if and only if all three multipliers are distinct or exactly two of the multipliers equal~$1$ (case~\eqref{noaut1} of Theorem~\ref{mainthm});  
 \item
 the automorphism group is cyclic of order~2 (we will use the notation $\mathfrak C_2$) if and only if two multipliers are equal but are not~$1$ or all three multipliers are~$1$ (case~\eqref{c2aut2} of Theorem~\ref{mainthm}); and
 \item
the automorphism group is isomorphic to the symmetric group $\mathfrak S_3$ if and only if all three multipliers are~$-2$ (case~\eqref{s3aut} of Theorem~\ref{mainthm}).  
\end{itemize}
Hence, we proceed by considering each possible automorphism group.

Finally, if $[\phi]$ corresponds to a $K$-rational point in the moduli space $\mathcal M_2$, then the symmetric functions of the multipliers satisfy $\sigma_1, \sigma_2 \in K$.  It follows from equation~\eqref{fpident} that $\sigma_3=\sigma_1-2$.  So in fact the three multipliers are roots of a monic cubic polynomial with coefficients $K$.  We will frequently use the fact that a quadratic rational map is completely determined up to $\overline K$-conjugacy by the three multipliers $ \lambda_1, \lambda_2, \lambda_3$, or equivalently by the first two symmetric functions on these multipliers $\sigma_1$ and $\sigma_2$.  See~\cite[Lemma~$3.1$]{milnrat} for details.

Using the coordinates $(\sigma_1, \sigma_2)$, Milnor provides a description of the symmetry locus in $\mathcal M_2$.  The maps with nontrivial automorphism groups lie on the cuspidal cubic defined by
\begin{equation}\label{symmlocus}
-2\sigma_1^3 - \sigma_1^2\sigma_2 + \sigma_1^2 + 8 \sigma_1\sigma_2 
+ 4\sigma_2^2-12\sigma_1 - 12\sigma +36 = 0.
\end{equation}
(This equation can be derived from the parameterization given in~\cite[Corollary 5.3]{milnrat}.)

\section{The case $\Aut(\phi) = \id$}
Let $[\psi] \in \mathcal M_2$ be a conjugacy class of quadratic rational maps with trivial automorphism group.  Then  either all three multipliers are distinct, or exactly two multipliers are~$1$.  From the remarks in Section~\ref{prelim}, we know that if $[\psi]$ corresponds to a $K$-rational point in $\mathcal M_2$, then we can find a map $\phi\in [\psi]$ defined over~$K$, and that there is only one $K$ conjugacy class for $[\psi]$.  Hence, it suffices to find a single map defined over $K$ for each $K$-rational point in $\mathcal M_2$.

\begin{lem}\label{alldif}
Let $[\psi]\in \mathcal M_2(K)$ correspond to a $K$-rational point $(\sigma_1, \sigma_2)$ in the moduli space of degree-$2$ rational maps, and let $\psi \in [\psi]$ be any representative.  If $\Aut(\psi)$ is trivial,  then  there is a unique map $\phi(z)$ in $[\psi]$ of the form
\begin{equation}\label{phidefalldiffeqn}
\phi(z)=\frac{P(z)}{Q(z)} = \frac{2 z^2 + (2-\sigma_1) z + (2-\sigma_1)}{-z^2 + (2+\sigma_1)z + 2-\sigma_1-\sigma_2},
\end{equation}
which is necessarily defined over $K$.
\end{lem}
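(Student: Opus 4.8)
The plan is to verify directly that the explicitly written map $\phi$ lies in $[\psi]$, rather than to appeal only to the abstract field-of-moduli result; this has the pleasant side effect of re-proving, constructively, that $[\psi]$ has a representative defined over $K$. Since the coefficients of $P$ and $Q$ in \eqref{phidefalldiffeqn} are polynomials in $\sigma_1,\sigma_2\in K$, the map is automatically defined over $K$ as soon as we know it genuinely has degree $2$, i.e. as soon as we know $P$ and $Q$ share no root. So the first step, which I expect to be the main technical point, is to compute the Sylvester resultant $\Res(P,Q)$ as a polynomial in $\sigma_1$ and $\sigma_2$. A direct computation should give
\[
\Res(P,Q) = -2\sigma_1^3 - \sigma_1^2\sigma_2 + \sigma_1^2 + 8\sigma_1\sigma_2 + 4\sigma_2^2 - 12\sigma_1 - 12\sigma_2 + 36,
\]
which is exactly the left-hand side of the symmetry-locus equation \eqref{symmlocus}. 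Since $\Aut(\psi)$ is trivial, the point $(\sigma_1,\sigma_2)$ does not lie on the symmetry locus, so $\Res(P,Q)\neq 0$; as the leading coefficients $2$ and $-1$ are nonzero (here $\ch K\neq 2$), both $P$ and $Q$ have degree $2$ and are coprime, so $\phi\in K(z)$ really is a quadratic map with no fixed point at infinity.

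Next I would identify the fixed points and their multipliers. Expanding $P(z)-zQ(z)$ collapses to the monic cubic
\[
P(z)-zQ(z) = z^3 - \sigma_1 z^2 + \sigma_2 z + (2-\sigma_1).
\]
Because $\sigma_3=\sigma_1-2$ for every quadratic rational map, as recorded in Section~\ref{prelim}, this is precisely $\prod_i(z-\lambda_i)$, the cubic whose roots are the multipliers; hence the fixed points of $\phi$ are $\lambda_1,\lambda_2,\lambda_3$ themselves. The key check is that the multiplier of $\phi$ at a finite fixed point $\xi$ equals $\xi$: writing $\phi'(\xi)=\bigl(P'(\xi)-\xi Q'(\xi)\bigr)/Q(\xi)$ and using $P(\xi)=\xi Q(\xi)$, one verifies the identity $P'(\xi)-\xi Q'(\xi)=P(\xi)$, so that $\phi'(\xi)=P(\xi)/Q(\xi)=\xi$ (and $Q(\xi)\neq 0$, since a common zero of $P$ and $Q$ was excluded above). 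Thus the multipliers of $\phi$, counted with multiplicity, are exactly $\lambda_1,\lambda_2,\lambda_3$, so the symmetric functions of its multipliers are $\sigma_1$ and $\sigma_2$.

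With this in hand the remaining steps are formal. By~\cite[Lemma~3.1]{milnrat} a quadratic map is determined up to $\overline K$-conjugacy by $(\sigma_1,\sigma_2)$, so $\phi$ is $\overline K$-conjugate to $\psi$, i.e. $\phi\in[\psi]$; together with the previous paragraph this exhibits a degree-$2$ representative of $[\psi]$ defined over $K$. For uniqueness, observe that the shape of \eqref{phidefalldiffeqn} is entirely determined by the pair $(\sigma_1,\sigma_2)$, and these are invariants of the $\overline K$-conjugacy class; so if a second map of this form lay in $[\psi]$, its parameters would have to coincide with $\sigma_1$ and $\sigma_2$, making it identical to $\phi$.

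The one place I would take care is the degenerate subcase in which exactly two multipliers equal $1$, so that the fixed-point cubic has a double root. There I want to confirm that $\sigma_3=\sigma_1-2$ and the multiplier identity still hold, which they do by direct substitution of $\lambda_1=\lambda_2=1$, and that \cite[Lemma~3.1]{milnrat} still pins down the class. Everything else is a routine if slightly lengthy polynomial computation; the two genuinely illuminating facts are the coincidences above, namely that the resultant of this normal form reproduces the symmetry-locus polynomial, and that this particular normal form has its fixed points equal to its own multipliers.
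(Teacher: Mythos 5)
Your proof is correct, and it reaches the same endgame as the paper (multipliers of the explicit map match $(\sigma_1,\sigma_2)$, then Milnor's rigidity result pins down the $\overline K$-class, then uniqueness by invariance of the $\sigma_i$), but the technical core is genuinely different. The paper never computes the resultant inside the proof: it splits into two cases, verifies the two-multipliers-equal-to-$1$ case by direct substitution, and in the distinct-multipliers case rules out $Q(\lambda)=0$ at a multiplier by an algebraic manipulation showing this would force $f'(\lambda)=0$, i.e.\ a double root of the multiplier cubic \eqref{multcubic}. You instead establish nondegeneracy globally and uniformly, by computing $\Res(P,Q)$ and observing it equals the symmetry-locus polynomial \eqref{symmlocus} (the paper relegates exactly this computation to the Remark \emph{after} the lemma, where it is used only to get the converse); your identity $P'(z)-zQ'(z)=P(z)$, valid for all $z$ and not just at fixed points, also streamlines the multiplier verification and removes the case split, since a double root of $P(z)-zQ(z)$ at $z=1$ automatically carries multiplier $1$. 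What each approach buys: the paper's argument is self-contained, needing only the distinctness of the roots; yours handles both subcases at once, makes degree-$2$-ness explicit (a point the paper glosses over, since three distinct fixed points with the stated multipliers already preclude a M\"obius map), and yields the converse of the lemma as a byproduct.

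One point you should shore up: your step ``$\Aut(\psi)$ trivial $\Rightarrow$ $(\sigma_1,\sigma_2)$ off the cubic $\Rightarrow$ $\Res(P,Q)\neq 0$'' uses the implication ``on the cubic $\Rightarrow$ nontrivial automorphisms,'' whereas Section~\ref{prelim} asserts only the containment in the other direction (maps with nontrivial automorphisms \emph{lie on} the cubic). The implication you need is true --- Milnor's parameterization of the symmetry locus by $k\mapsto kz+1/z$ surjects onto the affine points of the cuspidal cubic, since the parameter values $k=0,\infty$ go off to infinity in the $(\sigma_1,\sigma_2)$-plane --- but it requires this extra sentence (or a citation of \cite[Corollary~5.3]{milnrat} with that surjectivity made explicit), so include it. With that justification added, your argument is complete.
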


\begin{proof}
Define $\phi$, $P$, and $Q$ as in~\eqref{phidefalldiffeqn}. 
Recall that $\Aut(\psi)$ is trivial if and only if the three fixed point multipliers are distinct or exactly two of the multipliers are~$1$.  We treat these two cases separately.

First, assume that exactly two of the multipliers for fixed points of $\psi$ are~$1$, and let the third multiplier be $\lambda\neq 1$.  In this case, $\sigma_1 = 2+\lambda$, $\sigma_2=2\lambda+1$, and
\[
\phi(z) =  
\frac{2 z^2-\lambda z-\lambda}{-z^2+(\lambda +4) z-3 \lambda-1}.
\]
This map has a double fixed point at $z=1$ with multiplier~$1$, and a fixed point at $z=\lambda$ with multiplier $\lambda$.  Since the fixed point multipliers of $\phi$ and $\psi$ coincide,  $\phi(z) \in [\psi]$.

Now assume the three multipliers are distinct, and let 
\begin{equation}\label{multcubic}
f(x) = x^3 - \sigma_1 x^2 + \sigma_2 x - (\sigma_1-2)
\end{equation}
 be the cubic polynomial whose  roots in  $\overline K$ are the multipliers of the fixed points of~$\psi$. A simple calculation shows that if $f(\lambda) = 0$, then $P(\lambda) - \lambda Q(\lambda) = 0$.  In other words, if $\lambda$ is a multiplier for the conjugacy class $[\psi]$, then $z=\lambda$ is a fixed point of the map $\phi(z)$ in~\eqref{phidefalldiffeqn}.   Furthermore, if $f(\lambda) = 0$ and $Q(\lambda) \neq 0$, another calculation shows that $\phi'(\lambda) = \lambda$.  That is, the fixed point $z=\lambda$ has multiplier~$\lambda$.  So as long as the denominator $Q(z)$ does not vanish at any of the three multipliers, $\phi(z) \in K(z)$ has the correct fixed point multipliers.

If $Q(\lambda) = 0$, then
\begin{equation}\label{denomzero}
- \lambda^2 + (2+\sigma_1)\lambda + 2-\sigma_1-\sigma_2 = 0.
\end{equation}
Multiply \eqref{denomzero} by $\lambda$ and add $f(\lambda)=0$ to get
\[
2\lambda^2 + (2-\sigma_1)\lambda-(\sigma_1-2) = 0.
\]
Subtracting \eqref{denomzero}, we obtain
\[
3\lambda^2 - 2\sigma_1 \lambda  +\sigma_2 = 0.
\]
This exactly says that $f'(\lambda) =  0$, so we have a double root at $\lambda$, contradicting our assumption that the
roots of the multiplier cubic are distinct.  Therefore, as long as the roots are distinct, our choice of $Q(z)$ is
never zero at the corresponding fixed points.

As in the previous case, since the fixed point multipliers of $\phi$ and $\psi$ coincide, $\phi(z) \in [\psi]$.

Finally, any other map with the form
\[
\theta(z) = \frac{2 z^2 + (2-\sigma_1') z + (2-\sigma_1')}{-z^2 + (2+\sigma_1')z + 2-\sigma_1'-\sigma_2'}
\]
is conjugate to $\phi(z)$ if and only if $\sigma_1=\sigma_1'$ and $\sigma_2=\sigma_2'$, since these symmetric functions completely determine the conjugacy class.  Hence $\phi(z)$ is the unique map in $[\psi]$ of this form.
\end{proof}

\begin{rem}
The map $\phi$ in equation~\eqref{phidefalldiffeqn} has degree two if and only if the resultant $\Res(P,Q) \neq 0$.  We calculate that
\[
\Res(P,Q) = 
-2\sigma_1^3 - \sigma_1^2\sigma_2 + \sigma_1^2 + 8 \sigma_1\sigma_2 
+ 4\sigma_2^2-12\sigma_1 - 12\sigma +36.
\]
In other words, the vanishing of the resultant corresponds exactly to the symmetry locus given in equation~\eqref{symmlocus}.   Thus we see that the converse of Lemma~\ref{alldif} holds as well.
\end{rem}

\begin{ex}
Since the characteristic of $K$ is not~$3$, the multiplier cubic $f(x)$ in equation~\eqref{multcubic} has distinct roots if it is irreducible.  For instance, if $f(x)=x^3 + 2$, then
$\sigma_1 = \sigma_2 = 0$, so we can take
\[
\psi(z) = \frac{2z^2 + 2z +2}{-z^2 + 2z+ 2}.
\]
\end{ex}

\begin{ex}
We can detect
polynomials easily in this normal form.   If $\phi$ is a polynomial then it is conjugate over $K$ to $z^2 + c$ (recall that we assume the characteristic is not~$2$).  The multiplier at $\infty$ is zero, and since
the other fixed points satisfy $z^2 + c = z$, the other two multipliers sum to $2$ and multiply to $4c$. Hence, the
form presented in Lemma~\ref{alldif} is
\[
\psi(z) = \frac{2z^2}{-z^2 + 4z -4c},
\]
assuming that $c\neq 0$ (that is, excluding the case where $\Aut(\phi) \neq \id$).  This may be helpful in checking the number of preperiodic points over $\qq$,
as we expect different upper bounds for polynomials compared with non-polynomials (see~\cite[Theorem~$2$]{PrePerGraphs} and~\cite[Corollary~$1$]{poonenrefined}).
\end{ex}

This completes part~\eqref{noaut1} of Theorem~\ref{mainthm}.  It remains to consider the cases where the maps in the conjugacy class $[\psi]$ have nontrivial automorphism group.

\section{The case $\Aut(\phi) \cong \mathfrak C_2$}
The following is Lemma~$1$ in~\cite{PrePerGraphs}.
\begin{lem}\label{bkform}
Let $K$ be a  field  with $\ch(K) \neq 2, 3$ and
let $\phi$ be a rational map of degree~$2$ defined over $K$.  Then $\Aut(\phi) \cong \mathfrak C_2$  if and only if  $\phi$ is  conjugate over $K$ to some map of the form
\[
\phi_{k,b}(z) = kz + \frac b z
\]
with $k\in K\smallsetminus\{0,-1/2\}$ and $b \in K^*$.
Furthermore, two such maps $\phi_{k,b}$ and $\phi_{k',b'}$ are  conjugate over $K$ if and only if $k = k'$ and $b/b' \in \left(K^*\right)^2$.  The map $\phi_{k,b}$ has the automorphism $z \mapsto -z$.
\end{lem}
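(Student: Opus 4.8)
The plan is to prove the two directions of the equivalence separately and then the refinement governing $K$-conjugacy, using Milnor's trichotomy for $\Aut$ recalled in Section~\ref{prelim} together with the principle that two maps over $K$ which are $\overline K$-conjugate are already $K$-conjugate \emph{unless} they carry a nontrivial automorphism. The reverse implication is the easy one: the map $\phi_{k,b}$ visibly commutes with $\iota\colon z\mapsto -z$, so $\iota\in\Aut(\phi_{k,b})$ and the group is nontrivial. A direct computation gives the fixed-point multipliers $\lambda_1=\lambda_2=2k-1$ and $\lambda_3=1/k$, so all three equal $-2$ exactly when $k=-1/2$. Since $k\neq -1/2$, the map avoids the $\mathfrak{S}_3$ locus, so Milnor's trichotomy forces $\Aut(\phi_{k,b})\cong\mathfrak{C}_2$; as conjugacy preserves the isomorphism type of $\Aut$, any $\phi$ that is $K$-conjugate to such a $\phi_{k,b}$ also satisfies $\Aut(\phi)\cong\mathfrak{C}_2$. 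The conditions $k\neq 0$ and $b\neq 0$ are precisely what keep $\phi_{k,b}$ of degree two.

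For the forward implication, suppose $\Aut(\phi)\cong\mathfrak{C}_2=\{\id,f\}$. The first step is Galois descent: since $\phi$ is defined over $K$, the group $\Aut(\phi)$ is stable under $\Gal(\overline K/K)$, and as $f$ is its unique nontrivial element it is fixed by every Galois automorphism, so $f\in\PGL_2(K)$. The crucial structural step is to show that \emph{both} fixed points of $f$ are $K$-rational, so that they can be moved to $0$ and $\infty$ over $K$. Here $f$ permutes the fixed points of $\phi$; an involution fixing three points of $\pp^1$ would be the identity, so $f$ fixes exactly one $\phi$-fixed point $Q$ and swaps the other two (in the all-multipliers-$1$ subcase there is a single triple fixed point, which $f$ fixes). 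Because $Q$ is characterized intrinsically by $\phi$ and $f$, both defined over $K$, it is Galois-invariant and hence $Q\in\pp^1(K)$. A $K$-rational fixed point of an involution $f\in\PGL_2(K)$ forces the second to be rational too: after conjugating $Q$ to $\infty$ over $K$, $f$ becomes an affine involution $z\mapsto -z+c$ with $c\in K$, whose second fixed point $c/2$ lies in $K$ (using $\ch K\neq 2$). The hard part is exactly this rationality claim for the second fixed point, which would fail for a generic involution in $\PGL_2(K)$; it is rescued by the dynamical input that one of the two fixed points of $f$ must be the distinguished $K$-rational fixed point $Q$ of $\phi$.

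After conjugating over $K$ so that $Q=\infty$ and the second fixed point of $f$ is $0$, the involution becomes $\iota\colon z\mapsto -z$ and $\phi$ becomes an odd degree-two map fixing $\infty$. Writing such a map as $P/Q$ with $\deg P=2$ and $\deg Q\le 1$ and imposing $\phi(-z)=-\phi(z)$ leaves only two possible shapes; the shape having a fixed point at $0$ is excluded because $0$ is \emph{not} a fixed point of $\phi$ (it is the $f$-fixed point distinct from $Q$), and the surviving shape is exactly $kz+b/z$. Degree two forces $k\neq 0$ and $b\neq 0$, the multiplier computation gives $k=(\lambda_1+1)/2$, and $k\neq -1/2$ because otherwise all multipliers would be $-2$ and $\Aut$ would be $\mathfrak{S}_3$, contrary to hypothesis.

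Finally, for the classification of $K$-conjugacies among the $\phi_{k,b}$: the multipliers $(2k-1,2k-1,1/k)$ are determined by $k$ and conversely determine $k$, so $\phi_{k,b}$ and $\phi_{k',b'}$ are $\overline K$-conjugate if and only if $k=k'$, and $K$-conjugacy certainly requires $k=k'$. Given $k=k'$, any conjugating $h\in\PGL_2(K)$ must carry $\Aut(\phi_{k,b})$ to $\Aut(\phi_{k,b'})$, hence must preserve the common fixed-point set $\{0,\infty\}$ of $\iota$, and must fix $\infty$ as it is the only $\phi$-fixed point among $\{0,\infty\}$; thus $h(z)=cz$ for some $c\in K^*$, and $\phi_{k,b}^{\,h}=kz+(b/c^2)/z$ forces $b/b'=c^2\in(K^*)^2$. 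Conversely $h(z)=sz$ with $s^2=b/b'$ realizes the conjugacy, completing the equivalence.
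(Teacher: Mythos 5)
Your proof is correct, but there is an important structural point: the paper does not prove this lemma at all. It is quoted verbatim as Lemma~1 of \cite{PrePerGraphs}, and the only argument the present paper adds is the one-line computation that $\phi_{k,b}$ has fixed-point multipliers $\{2k-1,\,2k-1,\,1/k\}$, from which the $\overline K$-conjugacy criterion $k=k'$ follows. So relative to this paper your argument is necessarily a different route: it supplies the content the authors outsource. Your skeleton is the natural one and every step checks out. The reverse direction via the visible automorphism $z\mapsto -z$, the multipliers, and Milnor's trichotomy (with $k=-1/2$ excluded to avoid the $\mathfrak S_3$ locus, and $k=1$ correctly landing in the all-multipliers-$1$ subcase) is fine; so is the forward direction: Galois descent of the unique involution $f$ to $\PGL_2(K)$, rationality of the distinguished point $Q$ fixed by both $\phi$ and $f$ (unique since $f$ transposes the other two fixed points in the distinct case, and $Q$ is the triple fixed point in the $\lambda_i=1$ case), normalization to $f(z)=-z$ with $Q=\infty$ using $\ch K\neq 2$, and the even/odd decomposition $P(-z)=\pm P(z)$, $Q(-z)=\mp Q(z)$ showing an odd degree-two map is either $kz+b/z$ or $bz/(dz^2+g)$, the latter excluded because it fails to fix $\infty$. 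The $K$-conjugacy classification via $h^{-1}\iota h=\iota$, forcing $h$ to permute $\{0,\infty\}$ and then fix $\infty$ (since $0$ is not $\phi$-fixed), giving $h(z)=cz$ and $b/b'=c^2$, matches the stated criterion exactly. Two small caveats, neither fatal: your descent step tacitly treats $\overline K/K$ as Galois, which fails for non-perfect $K$; run it over the separable closure, observing that in the distinct-fixed-point case the fixed points of $\phi$ are separable (distinct roots of the fixed-point polynomial) and determine $f$ by its action on them, that in the $\lambda_i=1$ case the triple fixed point is $K$-rational because $\ch K\neq 3$, and that $\PGL_2(K^{\mathrm{sep}})^{\Gal(K^{\mathrm{sep}}/K)}=\PGL_2(K)$ is an application of Hilbert~90 worth citing. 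Second, your claim that $\overline K$-conjugacy is equivalent to $k=k'$ uses the fact that the multipliers determine the $\overline K$-class, which the paper attributes to \cite[Lemma~3.1]{milnrat}; for the lemma as stated you only need the trivial direction (conjugacy preserves multipliers), so you could cite less.
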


The fixed point multipliers for a map of the form $\phi_{k,b}(z)$ are $\{ 2k-1, 2k-1, 1/k \}$, thus $\phi_{k,b}$ and $\phi_{k',b'}$ are conjugate over $\overline K$ if and only if $k=k'$

Recall that a quadratic rational map $\phi(z)$ has automorphism group $\mathfrak C_2$ if and only if exactly two multipliers are equal and are not~$1$, or if all three multipliers are~$1$.      Hence, this completes part~\eqref{c2aut1} of Theorem~\ref{mainthm}.

\section{The case $\Aut(\phi) \cong \mathfrak S_3$}
As described in Section~\ref{prelim}, for a quadratic rational map $\psi$,  $\Aut(\psi) \cong \mathfrak S_3$ if and only if all three multipliers are~$-2$.  Hence, there is a single $\overline K$-conjugacy class of maps $[\psi]$ with $\Aut(\psi)\cong \mathfrak S_3$, and  $\phi(z) = \frac 1{z^2}\in [\psi]$ since $\Aut(\phi)$ is generated by $z\mapsto 1/z$ and $z \mapsto \omega z$ for $\omega$ a primitive cube root of unity.   We will use $\phi$ as our ``base map'' from which we find all possible $K$-conjugacy classes.

\begin{lem}\label{lem:s3}
Let $\psi(z) \in K(z)$ be a quadratic rational map with $\Aut(\psi) \cong \mathfrak S_3$.  Then:
\begin{enumerate}[\textup(a\textup)]

\item\label{s3irrat2cyc}
The function $\psi(z)$ is conjugate over $K$ to a rational map of the form
\[
\theta_{d,k}(z) = \frac{k z^2 - 2 d z +dk}{z^2 - 2 k z + d},
\qquad \text{with } k \in K,  d\in K^*, \text{ and } k^2\neq d.
\]
  Furthermore, $\theta_{d,k}(z)$ and $\theta_{d',k'}(z)$ are conjugate over $K$ if and only if
  \begin{align*}
  d'&=b^2 d, \text{ and }\\
k' &\in \left\{
\frac {bd}{k},  \frac{b \left(d^2 \gamma ^3+3 d k \gamma ^2+3 d \gamma +k\right)}{d k
   \gamma ^3+3 d \gamma ^2+3 k \gamma +1}.
\right\}
\end{align*}
for some $\gamma \in K$ and $b\in K^*$.
\textup(Note that the choices $\gamma = 0$ and $b=\pm 1$ give $\pm k$ as possibilities for $k'$.\textup)

\item\label{s3rat2cyc}
The function $\psi(z)$ has a $K$-rational two-cycle if and only if it is conjugate over $K$  to
\[
\theta_t(z) = t/z^2.
\]
Furthermore, $\theta_t(z)$ and $\theta_{t'}(z)$ are conjugate over $K$ if and only if $t/t' \in (K^*)^3$ or $t t' \in (K^*)^3$.
\end{enumerate}
\end{lem}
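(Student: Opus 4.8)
The plan is to treat the whole $\mathfrak S_3$ stratum as a family of twists of the single base map $\phi(z)=1/z^2$, organized by the geometry of its \emph{unique} two-cycle. The map $\phi$ has automorphism group $\mathfrak S_3=\langle\, z\mapsto \omega z,\ z\mapsto 1/z\,\rangle$ (with $\omega$ a primitive cube root of unity), and a direct computation with $\phi^2(z)=z^4$ shows that $\{0,\infty\}$ is its only two-cycle. Since every $\psi$ with $\Aut(\psi)\cong\mathfrak S_3$ is conjugate to $\phi$ over $\overline K$, each such $\psi$ also has exactly one two-cycle, and because $\psi$ is defined over $K$ the Galois group fixes this two-cycle setwise. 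The key structural observation is that the generator $z\mapsto 1/z$ of the $\mathfrak C_2$ quotient $\mathfrak S_3/\mathfrak C_3$ interchanges $0$ and $\infty$ while the normal $\mathfrak C_3$ fixes them; so the arithmetic of the two-cycle accounts for the $\mathfrak C_2$-part of the twisting (producing the invariant $d$ modulo squares, with $d$ a square exactly when the two-cycle is $K$-rational), while the residual $\mathfrak C_3$-freedom accounts for $k$.

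For the existence statement in (\ref{s3irrat2cyc}), I would first move the Galois-stable two-cycle of $\psi$ into standard position by an element of $\PGL_2(K)$. If the two points lie in $K$ I send them to $\{0,\infty\}$; in general they are the roots of an irreducible quadratic $z^2-sz+p$ over $K$, and the $K$-rational translation centering this pair at the origin (legitimate since $\ch K\neq 2$) moves them to a pair $\{\sqrt d,-\sqrt d\}$ with $d=s^2/4-p\in K^*$. After this normalization $\psi$ is a degree-two map defined over $K$ whose two-cycle is $\{\pm\sqrt d\}$; imposing the two linear conditions $\psi(\sqrt d)=-\sqrt d$, $\psi(-\sqrt d)=\sqrt d$ together with the two multiplier conditions $\sigma_1=-6,\ \sigma_2=12$ (all fixed multipliers equal $-2$) cuts the five projective coefficients down to a single free parameter; carrying out this elimination shows the map is exactly $\theta_{d,k}$ for some $k\in K$. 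Degeneracy is controlled by $k^2=d$, which collapses $\theta_{d,k}$ to a constant, so $k^2\neq d$ is forced for a genuine quadratic map.

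The classification up to $K$-conjugacy in (\ref{s3irrat2cyc}) is where the real work lies. Any $g\in\PGL_2(K)$ with $\theta_{d,k}^{\,g}=\theta_{d',k'}$ must carry the unique two-cycle $\{\pm\sqrt{d'}\}$ to $\{\pm\sqrt d\}$, so I would describe the subgroup of $\PGL_2(K)$ stabilizing such a symmetric pair. This stabilizer meets two cosets: maps fixing each of $\pm\sqrt d$ (a one-dimensional $K$-torus, which I would parametrize rationally by a single $\gamma\in K$) and maps interchanging them; composing with the rescalings $z\mapsto bz$, which send $\{\pm\sqrt d\}$ to $\{\pm\sqrt{b^2d}\}$, yields $d'=b^2d$. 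Tracking the effect of these conjugations on the coefficient $k$ produces the two listed families for $k'$, the swap-coset giving $k'=bd/k$ and the torus-coset giving the cubic-in-$\gamma$ rational expression. I expect this explicit computation of the torus action to be the main obstacle: the cubes appearing in the formula reflect the degree of $\phi$ and the third-power scaling already visible in part (\ref{s3rat2cyc}), and organizing the torus so that its rational parameter $\gamma$ yields precisely the stated numerator and denominator is the delicate step.

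Part (\ref{s3rat2cyc}) is the split case of this picture and is cleaner. For the equivalence, if $\psi$ is $K$-conjugate to $\theta_t=t/z^2$ then it inherits the $K$-rational two-cycle $\{0,\infty\}$; conversely a $K$-rational two-cycle can be moved to $\{0,\infty\}$ by a $K$-rational map, and then the conditions $\psi(0)=\infty$, $\psi(\infty)=0$ force the shape $\tfrac{az+b}{ez^2+fz}$, after which the $\mathfrak C_3$-automorphism (conjugated to $z\mapsto\omega z$) kills the remaining terms and a final $K$-scaling yields $t/z^2$ with $t\in K^*$. The two-cycle $\{0,\infty\}$ is now genuinely $K$-rational, and its stabilizer in $\PGL_2(K)$ is the transparent group $\{z\mapsto cz\}\cup\{z\mapsto c/z\}$; conjugating $\theta_t$ by $z\mapsto cz$ gives $\theta_{t/c^3}$ and by $z\mapsto c/z$ gives $\theta_{c^3/t}$, so $\theta_t\sim_K\theta_{t'}$ exactly when $t/t'\in(K^*)^3$ or $tt'\in(K^*)^3$. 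Finally I would match this with the Remark by checking that moving $\{\pm\sqrt d\}$ to $\{0,\infty\}$ (available precisely when $d\in(K^*)^2$) carries $\theta_{d,k}$ to $\theta_t$ with $t=(-\sqrt d+k)/(\sqrt d+k)$.
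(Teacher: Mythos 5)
Your proposal is correct and shares the paper's overall architecture: normalize the unique, Galois-stable two-cycle to the symmetric position $\{\pm\sqrt d\}$ by an element of $\PGL_2(K)$, then classify $K$-conjugacies through the $\PGL_2(K)$-stabilizer of such a pair, with your torus coset and swap coset corresponding exactly to the paper's two cases $\beta\neq 0$ (the cubic-in-$\gamma$ formula) and $\beta=0$ (giving $k'=bd/k$), and with $d'=b^2d$ forced because $\sqrt d$ and $\sqrt{d'}$ must generate the same extension; your part (\ref{s3rat2cyc}) computation with $z\mapsto cz$ and $z\mapsto c/z$ matches the paper's verbatim. Where you genuinely diverge is the existence step in (\ref{s3irrat2cyc}): the paper does not eliminate coefficients against the four conditions $\psi(\pm\sqrt d)=\mp\sqrt d$, $\sigma_1=-6$, $\sigma_2=12$; instead it writes down the general $g\in\PGL_2(\overline K)$ carrying the two-cycle to standard position, namely $g(z)=\alpha\left(z-\sqrt d\right)/\left(z+\sqrt d\right)$, computes $g^{-1}\circ\phi\circ g$ for $\phi(z)=1/z^2$ explicitly, and reads off that the result lies in $K(z)$ precisely when $k:=\sqrt d\,\bigl(\alpha^3+1\bigr)/\bigl(\alpha^3-1\bigr)\in K$. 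That twist computation buys two things your sketch leaves implicit: it proves in one stroke that the locus cut out by your four conditions is exactly the one-parameter family $\theta_{d,k}$ (your dimension count alone does not exclude extra components, so the elimination would really have to be carried through — it does work, but it is the heaviest unexecuted step in your plan), and it makes transparent both the provenance of $k$ and the origin of the cubes in the conjugacy formulas. One loose end to tie off: when the two-cycle is $K$-rational you normalize to $\{0,\infty\}$ and land on $\theta_t=t/z^2$, so to complete the universal claim of (\ref{s3irrat2cyc}) you must convert $\theta_t$ back into some $\theta_{d,k}$; the natural formula $k=c(1+t)/(1-t)$ with $d=c^2$ fails at $t=1$, i.e., for $1/z^2$ itself, so one should first rescale $\theta_1\sim_K\theta_{\lambda^3}$ with $\lambda^3\neq 1$ (the paper asserts this conversion in a parenthetical without detail, so the wrinkle is worth making explicit either way). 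Finally, your direct argument in (\ref{s3rat2cyc}) that the conjugated $\mathfrak C_3$-automorphism $z\mapsto\omega z$ kills the extra coefficients of $\frac{az+b}{ez^2+fz}$ is correct and slightly more self-contained than the paper's appeal to the argument of Milnor's Theorem 5.1.
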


\begin{proof}
 Let $\psi(z) \in K(z)$ be a quadratic rational map defined over $K$ with automorphism group $\mathfrak S_3$.   The proof of~\cite[Theorem~$5.1$]{milnrat} goes through unchanged for fields of characteristic different from $2$ and $3$, so we conclude that $\psi(z)$ is conjugate over~$\overline K$ to the map $\phi(z) = 1/z^2$.  Choose $g\in \PGL_2(\overline K)$ such that $\psi = \phi^g(z)$.  Such a $g$ must take the unique two-cycle $\left(0\leftrightarrow \infty \right)$ of $\phi$ to a (necessarily unique) two-cycle of $\psi$.

 There are polynomials $P_1, Q_1, P_2, Q_2 \in K[z]$ such that $\psi(z)= P_1(z)/Q_1(z)$, and the second iterate $\psi^2(z) = P_2(z)/Q_2(z)$.  Then the second dynatomic polynomial
\[
    \frac{P_2(z) - zQ_2(z)}{P_1(z) - zQ_1(z)}
\]
 is a polynomial in $K[z]$ whose roots are the finite period-$2$ points for~$\psi$.  (For details on dynatomic polynomials, see~\cite[Section4.1]{ads}.)   If this dynatomic polynomial is linear, then $\infty$ is on the two-cycle.  In this case, we may conjugate by some element of $\PGL_2(K)$ which sends the other (necessarily rational) point of period two to~$0$.  Then by an argument identical to the one in~\cite[Theorem~$5.1$]{milnrat}, this conjugacy in fact takes $\psi(z)$ to $\theta_t(z) = t/z^2$ for some $t\in K$.  The rest follows from the proof of part~\eqref{s3rat2cyc} below.
 
 If the second dynatomic polynomial is quadratic, the unique two-cycle for $\psi$ must be of the form $a\pm b\sqrt d$ with $a,b,d \in K$.  Let $f(z) = bz + a \in \PGL_2(K)$.  Then $\psi^f$ has the two-cycle $\left(\sqrt d\leftrightarrow -\sqrt d \right)$, and the conjugacy is over $K$.  So we may assume that $\psi(z)$ has its two-cycle at $\pm \sqrt d$, with $d\in K^*$.

Again, choose $g\in \PGL_2(\overline K)$ such that $\psi = \phi^g(z)$; then based on our knowledge about the two-cycles, it must have the form $g(z) =\frac{\alpha\left(z-\sqrt{d}\right) }{z+\sqrt{d}} $ for some nonzero $\alpha \in \overline K$.  We calculate
\[
    \psi(z) = g^{-1} \circ \phi \circ g (z) =
    \frac{\sqrt d \left(\frac{\alpha^3 + 1}
    {\alpha^3 -1}\right)z^2-2dz
    +d \sqrt d \left(\frac{\alpha^3 + 1}{\alpha^3 -1}\right)}
    {z^2-2 \sqrt d \left(\frac{\alpha^3 + 1}{\alpha^3 -1}\right) z +d}.
\]
Then $\psi(z) \in K(z)$ if and only if $\sqrt d \left(\frac{\alpha^3 + 1}{\alpha^3 -1}\right) =: k \in K$, which leads to the map
\[
    \theta_{d,k}(z) = \frac{P(z)}{Q(z)}=\frac{k z^2-2 d z+d k}{z^2-2 k z+d}.
\]
Since $\alpha \neq 0, \infty$,  $k\neq \pm \sqrt d$.  (And in fact, $\theta_{d,\,\pm \sqrt d}$ are not quadratic maps; the resultant of $P(z)$ and $Q(z)$ vanishes precisely when $d \in \{ 0, k^2 \}$.)

Since every $\theta_{d,k}$ is conjugate to $\phi(z) = 1/z^2$, each of these maps has automorphism group $\mathfrak S_3$.  We have now proved that $\psi$ is conjugate over $K$ to some $\theta_{d,k}$, so it remains only to decide when two such maps are conjugate to each other over $K$.

First, if two maps are conjugate over $K$, their two-cycles must generate the same field extension of $K$, and $\sqrt d$ and $\sqrt {d'}$ generate the same extension if and only if $d'/d \in \left(K^*\right)^2$.
Hence, if  $\theta_{d, k}$ is conjugate over $K$ to $\theta_{d',k'}$ we must have  $d'=b^2 d$ for some $b \in K^*$.

Now assume that $h \in \PGL_2(K)$ satisfies $\theta_{d,k}^h = \theta_{b^2 d,k'}$.  Because $h$ must send the two-cycle of $\theta_{b^2 d,k} $ to the two-cycle of $\theta_{d,k} $, we can choose the sign of $b$ so that  $h(b \sqrt d) = \sqrt d$ and $h(-b\sqrt d) = - \sqrt d$.  Therefore
\[
    h = \begin{pmatrix} \beta & -b d \gamma \\-\gamma & b \beta \end{pmatrix}.
\]
If $\beta = 0$, we conjugate $\theta_{d,k}$ by $h$ to find that $k' = b d/k$.

If $\beta \neq 0$, we may take $h$ to be
\[
    h = \begin{pmatrix} 1 & -b d \gamma \\ -\gamma & b \end{pmatrix}.
\]
In this case, we conjugate $\theta_{d,k}$ by $h$ to find that
\[
k' = \frac{b \left(d^2 \gamma ^3+3 d k \gamma ^2+3 d \gamma +k\right)}{d k
   \gamma ^3+3 d \gamma ^2+3 k \gamma +1}.
\]

For part (b), if the two-cycle of $\psi$ is rational, we may certainly conjugate over $\PGL_2(K)$ so that the two-cycle is $(0 \leftrightarrow \infty)$.

If both period-$2$ points of $\psi$ are finite, then it is clear from above that the unique two-cycle of $\psi$ is defined over $K$ if and only if $\psi$ is conjugate over $K$ to $\theta_{d,k}$ with $d\in (K^*)^2$.  In this case, conjugation by $\frac{z-\sqrt d}{z+\sqrt d} \in \PGL_2(K)$ takes the two-cycle $(\sqrt d \leftrightarrow -\sqrt d)$ of $\theta_{d,k}$ to $(0\leftrightarrow \infty)$, and the resulting map is
\[
\theta_t = \frac t{z^2}, \quad \text{ where } t = \frac{-\sqrt d + k}{\sqrt d +k} \in K^*.
\]
(Note that if one of the period-$2$ points is $\infty$, we can now conjugate $\theta_t(z) = t/z^2$ into the form given in part~\eqref{s3irrat2cyc}.)

Now suppose we have $h\in \PGL_2(K)$ such that $\theta_t^h = \theta_{t'}$.  Since $\{0,\infty\}$ is fixed by $h$, either $h(z) = \lambda z$ or $h(z) = \lambda/z$ for some $\lambda \in K^*$.  Conjugating $\theta_t(z)$ by $h(z) = \lambda z$ and solving leads to $t/t' = \lambda^3$.  Conjugating by $h(z) = \lambda/z$ leads to $t t' = \lambda^3$.
\end{proof}

This completes part~\eqref{s3aut} of Theorem~\ref{mainthm}, and the proof is now complete.

\begin{rem}
It would, of course, be desirable to have the conjugacy condition in Lemma~\ref{lem:s3} be more obviously symmetric in $k$ and $k'$ and easier to check for any two given quadratic rational maps.

In the case $K=\qq$, we may alter Lemma~\ref{lem:s3} to take $d\in \qq^*$ squarefree, which then forces $b=\pm1$.  Furthermore, in this case a calculation shows the following:

\begin{align*}
k'=d/k &\Leftrightarrow
\frac{\left(k + \sqrt{d}\right)
   \left(k'-\sqrt{d}\right)}{\left(k-\sqrt{d}\right)
   \left(k'+\sqrt{d}\right)} = -1,\\
k'=-d/k &\Leftrightarrow
\frac{\left(k + \sqrt{d}\right)
   \left(k'+\sqrt{d}\right)}{\left(k-\sqrt{d}\right)
   \left(k'-\sqrt{d}\right)} = -1,\\
 k'= \frac{ d^2 \gamma ^3+3 d k \gamma ^2+3 d \gamma +k}{d k
   \gamma ^3+3 d \gamma ^2+3 k \gamma +1}
   &\Leftrightarrow
\frac{\left(k+\sqrt{d}\right)
   \left(k'-\sqrt{d}\right)}
   {\left(k-\sqrt{d}\right)
   \left(k'+\sqrt{d}\right)} = 
      \left(\frac{1-\gamma\sqrt{d}}{1+ \gamma\sqrt{d} } \right)^3, \text{ and}\\
  k'= -\frac{ d^2 \gamma ^3+3 d k \gamma ^2+3 d \gamma +k}{d k
   \gamma ^3+3 d \gamma ^2+3 k \gamma +1}
   &\Leftrightarrow
\frac{\left(k+\sqrt{d}\right)
   \left(k'+\sqrt{d}\right)}
   {\left(k-\sqrt{d}\right)
   \left(k'-\sqrt{d}\right)} = 
      \left(\frac{1-\gamma\sqrt{d}}{1+ \gamma\sqrt{d} } \right)^3.
  \end{align*}
  
Now suppose
\[
	\alpha = \left(\frac{1-\gamma\sqrt{d}}{1+ \gamma\sqrt{d} } \right).
\]

If $\alpha \neq  -1$, we may rearrange to find
\[
 \gamma \sqrt d = \frac{1-\alpha}{1+\alpha}.
\]

Since $\gamma \in \qq$, this puts restrictions on $\alpha$.  Namely, we rationalize the denominator to find

\[
	\gamma \N(1+\alpha) \sqrt d = (1-\alpha)(1+\overline\alpha),
\]
where $\N$ is the usual norm in the quadratic number field $\qq(\sqrt d)$.  Since $\gamma \N(1+\alpha) \in \qq$, we conclude that $1 - \alpha \overline\alpha = 0$, or in other words $\N(\alpha) =1$.

Conversely, if $\N(\alpha) = 1$, then $(1-\alpha) (1+ \overline\alpha) = \xi \sqrt d$ for some $\xi \in \qq$.  Note that if $\alpha \neq -1$, $N(1+\alpha) \neq 0$, so we may set $\gamma = \xi / \N(1+\alpha)$ and reverse the argument above to get 
\[
	\alpha = \left(\frac{1-\gamma\sqrt{d}}{1+ \gamma\sqrt{d} } \right).
\]

In other words, for $K=\qq$ we have the following simplified version of Lemma~\ref{lem:s3} part~\eqref{s3irrat2cyc}: The function $\psi(z)$ is conjugate over $\qq$ to a rational map of the form
\[
\theta_{d,k}(z) = \frac{k z^2 - 2 d z +dk}{z^2 - 2 k z + d},
\qquad \text{with } k \in K \text{ and } d \text{ squarefree.}
\]
  Furthermore, $\theta_{d,k}(z)$ and $\theta_{d',k'}(z)$ are conjugate over $K$ if and only if
  $d'= d$, and one of
\[
  \frac{\left(k+\sqrt{d}\right)
   \left(k'-\sqrt{d}\right)}{\left(k-\sqrt{d}\right)
   \left(k'+\sqrt{d}\right)}
   \qquad
   \text{ or } 
   \qquad
  \frac{\left(k+\sqrt{d}\right)
   \left(k'+\sqrt{d}\right)}{\left(k-\sqrt{d}\right)
   \left(k'-\sqrt{d}\right)} 
\]
is a perfect cube of norm~1 in the field $\qq(\sqrt d)$.  

\end{rem}

\bibliographystyle{plain}
\bibliography{refs}

\end{document}